\newtheorem{theorem}{Theorem}[section]
\newtheorem{lemma}{Lemma}[section]
\newtheorem{corollary}{Corollary}[section]
\numberwithin{mytheorem}{section} 
\numberwithin{mylemma}{section} 
\numberwithin{mycorollary}{section} 
\title[Proof of TAC for GMR]{The Graph Minor Relation Satisfies the Twin Alternative Conjecture}
\author{Jorge Bruno \\ email: \href{mailto:jorgebruno@bpp.com}{jorgebruno@bpp.com}}
\newcommand{\N}{\mathbb{N}}
\begin{document} 
\maketitle

\begin{abstract} In 2006 Bonato and Tardif posed the Tree Alternative Conjecture (TAC): the equivalence class of a tree under the embeddability relation is, up to isomorphism, either trivial or infinite. In 2022 Abdi, et al. provided a rigorous exposition of a counter-example to TAC developed by Tetano in his 2008 PhD thesis and in 2023 the present author provided a positive answer to TAC for the topological minor relation. Along with embeddability and the topological minor, the graph minor relation completes the triad of the most widely studied graph relations. By means of exploiting some basic set theoretic notions, in this paper we provide a positive answer to TAC for the graph minor relation.  

\end{abstract}
\section{Introduction} 
Consider the following local operations on a graph \cite{F}:
\begin{enumerate}
\item Edge removal  (\textcolor{red}{$\backslash e$}): 
\begin{center}
 \includegraphics[scale = 1.2]{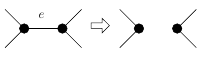} 
 \end{center}
\item Edge contraction  (\textcolor{red}{$\slash e$}): 
\begin{center}
 \includegraphics[scale = 1.2]{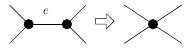} 
 \end{center}
 \item Vertex removal  (\textcolor{red}{$\backslash v$}): 
\begin{center}
 \includegraphics[scale = 1.2]{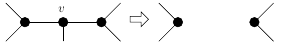} 
 \end{center}

 \item Deg-2 vertex dissolution  (\textcolor{red}{$\slash v$}): 
\begin{center}
 \includegraphics[scale = 1.2]{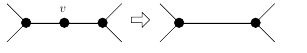} 
 \end{center}
 \end{enumerate}
For $\mathcal{L} \subseteq \{ \textcolor{red}{\backslash e, \slash e,  \backslash v, \slash v}\}$ then $H\leq_\mathcal{L} G$ if $H$ is obtained from $G$ by a sequence of operations from $\mathcal{L} $. The table below summarises the most commonly studied graph relations arising from the above local graph operations.\footnote{Since all trees are connected, the induced and regular versions of the above are equivalent. Therefore, given that our results concern only trees, we make no distinction between the two versions of any of the above graph relations.} 

\bigskip

\begin{center}
\begin{tabular}{ | c || c | c | c | c | }
\hline
Relation & $ \textcolor{red}{\backslash e}$ &  $\textcolor{red}{\slash e}$ & $ \textcolor{red}{\backslash v}$ &  $\textcolor{red}{\slash v}$\\
\hline
\hline
 subgraph/embeddable ($\leq$) &\textcolor{blue}{ $\bullet$} & &\textcolor{blue}{ $\bullet$} & \\ 
 \hline
  induced subgraph/strongly embeddable & & &\textcolor{blue}{ $\bullet$} & \\ 
\hline
  \hline
 topological minor ($\leq^\sharp$) & \textcolor{blue}{ $\bullet$}& & \textcolor{blue}{ $\bullet$}&\textcolor{blue}{ $\bullet$} \\ 
 \hline
 induced topological minor & & &\textcolor{blue}{ $\bullet$} &\textcolor{blue}{ $\bullet$} \\ 
\hline
 \hline
 graph minor ($\leq^*$) & \textcolor{blue}{ $\bullet$}&\textcolor{blue}{ $\bullet$} &\textcolor{blue}{ $\bullet$} & \\ 
 \hline
   induced graph minor && \textcolor{blue}{ $\bullet$}& \textcolor{blue}{ $\bullet$} & \\ 
   \hline

\end{tabular}
\end{center}

\bigskip

While the local operations defined above intuitively describe the various minor relations for finite graphs, extending them to infinite graphs via potentially infinite sequences of operations requires strict formalisation. For instance, performing an infinite sequence of deg-2 vertex dissolutions (or edge contractions) along a ray could theoretically result in ill-defined structures, such as edges with only one endpoint or none at all. To avoid these pathological limits and rigorously define $\leq^\sharp$ and $\leq^*$ for infinite trees, we employ the standard model-theoretic approach. Rather than relying on sequences of local operations, we define the minor relations via a mapping that assigns to each vertex of the minor a connected subset (or subtree) of the host graph. This ensures that all vertices and adjacencies remain well-defined regardless of the tree's cardinality. We formally introduce this model definition in Section 2.

\bigskip

\noindent
Two trees $T,S$ are {\it mutually embeddable}, $T\equiv S$, (resp. {\it topologically equivalent}, $T\equiv^\sharp S$, and {\it mutual minors}, $T\equiv^* S$) if $T \leq S \leq T$ (resp. $T \leq^\sharp S \leq^\sharp T$ and $T \leq^* S \leq^* T$). The symbol $\cong$ is reserved for the isomorphism relation. Since any deg-2 vertex dissolution can be interpreted as an edge dissolution we get that 
\[
T\cong S \Rightarrow T \leq S \Rightarrow T \leq^\sharp S \Rightarrow T \leq^* S.
\]
 The equivalence class of a tree $T$ with respect to $\equiv, \equiv^\sharp$ and $\equiv^*$ is denoted by $[T]$, $[T]_\sharp$ and $[T]_*$, respectively. 

The authors of \cite{A} proved that $|[T]| = 1$ or $\infty$ for any rayless tree and conjectured the same must be true of any tree: the Tree Alternative Conjecture (TAC) stated that the number of isomorphism classes of trees mutually embeddable with a given tree $T$ is either 1 or infinite. TAC is certainly true for finite trees (since mutually embeddable finite trees are necessarily isomorphic) and has been confirmed for a number of nontrivial classes of infinite trees -  \cite{A}, \cite{LPS} and \cite{T}. In particular, in \cite{T} it is shown that TAC holds for all rooted trees, where a {\it rooted tree} $(T,r)$ is one with a distinguished vertex $r\in V(T)$. In 2022 a counterexample to TAC was found by LaFlamme et al. \cite{ALTW} where for each $n\in \N$ an unrooted and locally finite tree $T_n$ is constructed with $|[T_n]| = n$. 

A natural way to generalise this line of enquiry is to extend TAC to the topological and graph minor relations. Given $\hat{R} \in \{\equiv, \equiv^\sharp, \equiv^*\}$ we define\\ 

\noindent
  {\bf TAC($\hat{R}$).} For any tree $T$,
  
   \[
   |[T]_{\hat{R}}| =1 \text{ or } \geq \aleph_0.
   \]  
   
   \bigskip
  \noindent
    {\bf RootedTAC($\hat{R}$).} For any tree $T$ and any $r\in V(T)$,
   \[
   |[(T,r)]_{\hat{R}}| = 1 \text{ or } \geq \aleph_0.
   \]  
   
   \bigskip
  \noindent
As stated earlier, TAC($\equiv$) is false and RootedTAC($\equiv$) is true. RootedTAC($\equiv^\sharp$) and TAC($\equiv^\sharp)$ are true (\cite{BP1}, \cite{BP2}) and in this manuscript we prove the following result.\\

  \noindent
  {\bf Main Theorem.} 
 TAC($ \equiv^*$) and RootedTAC($ \equiv^*$) hold. \\
 
 \noindent
 The table below summarises past and present developments.

  \begin{table}[h!]
\centering
\begin{tabular}{|l|c|c|}
\hline
\textbf{Relation ($\hat{R}$)} & \textbf{RootedTAC($\hat{R}$)} & \textbf{TAC($\hat{R}$)} \\ \hline
Embeddability ($\equiv$) & \begin{tabular}[c]{@{}c@{}}True\\ (Tyomkyn, 2008)\end{tabular} & \begin{tabular}[c]{@{}c@{}}False\\ (Abdi et al., 2022)\end{tabular} \\ \hline
Topological Minor ($\equiv^\sharp$) & \begin{tabular}[c]{@{}c@{}}True\\ (Bruno \& Szeptycki, 2022)\end{tabular} & \begin{tabular}[c]{@{}c@{}}True\\ (Bruno \& Szeptycki, 2022)\end{tabular} \\ \hline
Graph Minor ($\equiv^*$) & \begin{tabular}[c]{@{}c@{}}\textbf{True}\\ \textbf{(This Paper)}\end{tabular} & \begin{tabular}[c]{@{}c@{}}\textbf{True}\\ \textbf{(This Paper)}\end{tabular} \\ \hline 

\end{tabular}
\label{tab:tac_results}
\end{table}
 
Our approach to proving the Main Theorem is to deal with large and small trees separately: a {\it small tree} is one where every ray is eventually bare and a ray $R = v_1\ldots$ is {\it eventually bare} provided $\exists M \in \N$ where, for all $n\geq M$, $\deg(v_n) = 2$. Trees that are not small are said to be {\it large}. Since all rayless trees are small, the following result holds.\\

\begin{corollary} For any $\hat{R} \in \{\equiv, \equiv^\sharp, \equiv^*\}$, RootedTAC($\hat{R})$ holds. Further, when considering only small trees, TAC($\hat{R})$ also holds.
 \end{corollary}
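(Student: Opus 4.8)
The three RootedTAC assertions require no new work: RootedTAC($\equiv$) is the theorem of \cite{T}, RootedTAC($\equiv^\sharp$) is established in \cite{BP1}, and RootedTAC($\equiv^*$) is half of the Main Theorem. Likewise TAC($\equiv^\sharp$) holds for all trees \cite{BP1} and TAC($\equiv^*$) for all trees is the other half of the Main Theorem, so the only piece genuinely needing an argument is TAC($\equiv$) restricted to small trees. The plan is to prove ``TAC($\hat{R}$) for small trees'' for all three $\hat{R}\in\{\equiv,\equiv^\sharp,\equiv^*\}$ by one uniform reduction of TAC to RootedTAC that never inspects which of $\leq,\leq^\sharp,\leq^*$ is in force.

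Fix such an $\hat{R}$ and a small tree $T$. First I would record that smallness is an invariant of $[T]_{\hat{R}}$: if a tree $S$ is a subgraph, a topological minor, or a minor of a small tree and $S$ contained a ray through infinitely many vertices of degree $\ge 3$, that ray would yield inside $T$ a ray through infinitely many branch vertices, contradicting smallness; hence every member of $[T]_{\hat{R}}$ is small. The degenerate cases --- $T$ finite, or $T$ a finite path, a ray, or a double ray --- are handled by hand, each being alone in its $\hat{R}$-class. For any other small $T$ I would attach a canonical finite subtree $\mathrm{core}(T)$ (for $\leq$ and $\leq^\sharp$, the convex hull of the branch vertices; for $\leq^*$ a contraction-stable variant built by iterated suppression of degree-$2$ vertices together with pruning of finite branches) and root $T$ at the centre $c_T$ of $\mathrm{core}(T)$ --- a vertex, or an edge, the latter reduced to the vertex case by the usual passage to an edge-rooted version. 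The crux is then the claim that $S\mapsto (S,c_S)$ is a bijection between the isomorphism types in $[T]_{\hat{R}}$ and those in $[(T,c_T)]_{\hat{R}}$. To prove it I would argue that an $\hat{R}$-equivalence of $S$ with $T$, composed with its partner, is an injective self-map of $S$ that must carry $\mathrm{core}(S)$ onto itself; from this one gets that each of the two original maps restricts to an isomorphism $\mathrm{core}(S)\cong\mathrm{core}(T)$ and hence, the centre being isomorphism-invariant, sends $c_S$ to $c_T$, so it is already a rooted $\hat{R}$-morphism $(S,c_S)\to (T,c_T)$; conversely any $(U,w)\equiv_{\hat{R}}(T,c_T)$ has $U\equiv_{\hat{R}}T$ and, by the same rigidity, $w=c_U$. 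Granting the claim, $|[T]_{\hat{R}}|=|[(T,c_T)]_{\hat{R}}|$, and RootedTAC($\hat{R}$) forces this number to be $1$ or $\infty$.

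The difficulty sits in two places. The first is that $\mathrm{core}(T)$ is visibly finite and canonical only when $T$ has finitely many branch vertices; a small tree can have infinitely many of them, dispersed among infinitely many ends, so the convex hull is then infinite. For that case I would either isolate a finite canonical reduct of a different kind, or --- cleaner --- abandon rooting and build by hand infinitely many pairwise non-isomorphic small trees inside $[T]_{\hat{R}}$ (duplicating or lengthening rays, inserting and then re-absorbing pendant leaves far out along a ray), so that $|[T]_{\hat{R}}|=\infty$ directly. The second, and the point at which the graph minor really departs from embeddability and the topological minor, is that a contraction can create a high-degree vertex out of a path, so degree is not monotone along a $\leq^*$-morphism and ``core $=$ hull of the branch vertices'' is not $\leq^*$-stable; the main obstacle is to pin down a canonical reduct that \emph{is} preserved under taking minors of a small tree and that still determines the centre. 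Once that reduct is available, everything else is finite bookkeeping that does not see the choice of $\hat{R}$, which is exactly why the conclusion holds for $\equiv$, $\equiv^\sharp$ and $\equiv^*$ at once and why, rayless trees being small, it recovers the rayless case of \cite{A} for all three relations.
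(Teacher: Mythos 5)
Your framing is right: once the Main Theorem and the results of \cite{T} and \cite{BP1} are granted, everything in the corollary except TAC($\equiv$) restricted to small trees is a direct citation, and you correctly isolate that as the residual obligation. But your proposed proof of that residual piece has the gap you yourself flag, and it is not a cosmetic one. Small trees can have infinitely many branch vertices dispersed over infinitely many ends (e.g.\ a vertex of infinite degree with a finite path hanging off it in each direction, every far endpoint then given two extra pendants), so $\overline{F(T)}$ is infinite and there is no finite ``convex hull of branch vertices'' to take a centre of. Your two escape routes --- a finite canonical reduct of some other kind, or producing infinitely many twins by hand --- are each left as sketches, and neither is routine: the second in particular requires knowing in advance that the class is nontrivial, and for the first you concede that for $\leq^*$ you do not yet know what the reduct should be. So as written the argument does not close.

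The paper does not root at a core at all; it adapts the Polat--Sabidussi fixed-element machinery. Instead of the closure of the \emph{branch} vertices it iterates the closure of the \emph{infinite-degree} vertices: set $T_1=\overline{\mathrm{inf}(T)}$, and transfinitely $T_{\alpha+1}=\overline{\mathrm{inf}(T_\alpha)}$, taking intersections at limits. For a small tree this sequence is a strictly decreasing chain of connected rayless subtrees to which Lemma~1.1 of \cite{P} applies, so it stabilises at a successor stage whose predecessor $T_\lambda$ is a \emph{locally finite} rayless subtree; Theorem~\ref{thm:equiv} then collapses self-models of $T_\lambda$ to automorphisms and Halin's theorem \cite{H} yields a vertex or edge fixed by every self-model of $T$. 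With that fixed element in hand, the unrooted alternative is pushed through exactly as in \cite{A}. This is precisely the device that sidesteps your obstruction: $\mathrm{inf}(T)$ is tame even when $F(T)$ is not, because a small tree with infinitely many branch vertices still has its infinite-degree vertices arranged raylessly, and the transfinite iteration is what reduces the remaining ``infinitely many ends'' to a locally finite kernel. If you want to repair your writeup, replace the branch-vertex core by $\overline{\mathrm{inf}(T)}$ and use the transfinite descent of Theorem~\ref{thm:fixed} rather than a one-shot hull; the rest of your reduction to the rooted case then survives, and the argument is, as you hoped, uniform in $\hat R\in\{\equiv,\equiv^\sharp,\equiv^*\}$.
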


It is important to note that the case for large trees under the graph minor relation follows directly from the author's previous work on the topological minor relation \cite{BP1}. That prior work utilised the method of `curtailing' to establish that the equivalence classes of large trees have size at least $2^{\aleph_0}$. Because the topological minor relation is strictly stronger than the graph minor relation, the large tree case is immediately satisfied. Consequently, the primary contribution and technical focus of the present paper lie entirely in establishing the conjecture for {\it small trees} under the graph minor relation. \section{Background}

An equivalent interpretation for the graph minor relation employs the notion of connected subsets: $T \leq^* S$ precisely when there exists a function $\mu: V(T) \to \text{Sub}(S)$, where $\text{Sub}(S)$ is the collection of all connected subgraphs (i.e., subtrees) of $S$, so that 
\begin{enumerate}
\item $V(\mu(v)) \cap V(\mu(w)) = \emptyset$ whenever $v\not = w$, and
\medskip
\item $\exists u_vu_w\in E(S)$ with $u_v \in V(\mu(v))$ and $u_w\in V(\mu(w))$ whenever $vw \in E(T)$.
\end{enumerate}
We call the function $\mu$ a {\it model of $T$ in $S$}. Composing models is done as follows: given models $\mu_1: V(T) \to \text{Sub}(S_1)$ and $\mu_2:V(S_1) \to \text{Sub}(S_2)$, we {\it compose} $\mu_1$ with $\mu_2$ as $\mu_2\circ^*\mu_1: V(T) \to \text{Sub}(S_2)$, where $(\mu_2\circ^*\mu_1)(v)$ is the subtree of $S_2$ spanned by all subtrees $\mu_2(w)$ for $w\in V(\mu_1(v))$ and the edges joining them.

A {\it rooted tree} $(T,r)$ is one with a distinguished vertex $r\in V(T)$. All rooted trees naturally give rise to a tree order on its set of vertices: $v\leq_T w$ precisely when the unique path from $r$ to $w$ contains $v$. Given a vertex $v\in V(T)$, the full subtree of $(T,r)$ rooted at $v$ is the one containing exactly those $w\geq_T v$ and is denoted by $(T_v,v)$. Any rooted tree $(T,r)$ can be equivalently interpreted as a meet semilattice - $(V(T),r,\wedge_T)$ - and, consequently, any connected component in $(T,r)$ must have a minimum vertex in $T$'s order. In turn, the notion of graph minor can be naturally extended to rooted trees as follows. A rooted tree $(T,r)$ is a {\it rooted graph minor} of $(S,s)$, $(T,r) \leq^*(S,s)$, if there exists a model $\mu:V(T) \to \text{Sub}(S)$ that preserves the meet semilattice structure of $(T,r)$ as follows: for any pair $v,w\in V(T)$ we have\footnote{The stronger condition $\bigwedge_S\{t\in V(\mu(v)) \cup V(\mu(w))\}= \bigwedge_S\mu(w\wedge_T v)$ describes the topological minor relation.}  
\[
\bigwedge_S\left\{t\in V(\mu(v)) \cup V(\mu(w))\right\} \in \mu(w\wedge_T v).
\]
Equivalently, $(T,r) \leq^*(S,s)$ if there exists a model $\mu:V(T) \to \text{Sub}(S)$ so that the function $\mu_{\min}:V(T) \to V(S)$ with $v \mapsto \min(\mu(v))$ establishes a meet semilattice homomorphism between $(V(T), r, \wedge_T)$ and $(\mu_{\min}(V(T)), \mu_{\min}(r), \wedge_S)$. The equivalence class of a tree $T$ with respect to the graph minor relation is denoted by $[T]_*$ and if the tree is rooted then we write $[(T,r)]_*$.



\subsection{Large Trees} Our approach to establishing the Main Theorem is to deal with large and small trees separately. In \cite{BP1} it is shown that for any large tree $T$ and any $r\in V(T)$ both $|[T]_\sharp|$ and $|[(T,r)]_\sharp| \geq 2^{\aleph_0}$. Since $\leq^\sharp$ is stronger than $\leq^*$, it follows that the Main Theorem holds for all large trees.

\begin{theorem}\label{thm:large} TAC($\equiv^*$) and RootedTAC($\equiv^*$) hold for all large trees.
\end{theorem}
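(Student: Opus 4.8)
The plan is to deduce Theorem~\ref{thm:large} directly from the analogous statement for the topological minor relation quoted above, exploiting the fact that $\leq^*$ is coarser than $\leq^\sharp$. The first step is to record the monotonicity precisely: since every deg-2 vertex dissolution is in particular an edge dissolution, a witnessing topological embedding of $H$ into $G$ is also a model of $H$ in $G$ — the subdivided branch paths serve as the branch sets $\mu(v)$ — and in the rooted setting this model visibly respects the tree order. Hence $H\leq^\sharp G$ implies $H\leq^* G$, and $(H,r)\leq^\sharp(G,s)$ implies $(H,r)\leq^*(G,s)$. Iterating in both directions, $T\equiv^\sharp S$ implies $T\equiv^* S$, and likewise for rooted trees; consequently every isomorphism class occurring in $[T]_\sharp$ also occurs in $[T]_*$, and every isomorphism class occurring in $[(T,r)]_\sharp$ also occurs in $[(T,r)]_*$. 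In other words $|[T]_\sharp|\le|[T]_*|$ and $|[(T,r)]_\sharp|\le|[(T,r)]_*|$.

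The second step is then immediate. Let $T$ be a large tree and $r\in V(T)$. By the result of \cite{BP1} recalled in this subsection, $|[T]_\sharp|\ge 2^{\aleph_0}$ and $|[(T,r)]_\sharp|\ge 2^{\aleph_0}$. Combining this with the inequalities of the previous paragraph yields $|[T]_*|\ge 2^{\aleph_0}$ and $|[(T,r)]_*|\ge 2^{\aleph_0}$. Since $2^{\aleph_0}$ exceeds every natural number, neither cardinal can lie in $\N\smallsetminus\{1\}$, which is exactly the assertion of TAC($\equiv^*$) and RootedTAC($\equiv^*$) for $T$.

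The argument is short, so the main obstacle is purely one of care rather than depth: one must verify that a rooted topological embedding genuinely induces a rooted model, i.e. that the condition $v\le_T w\Leftrightarrow\min_S\{t\in V(\mu(v))\}\le_S\min_S\{t\in V(\mu(w))\}$ built into the definition of rooted graph minor is inherited from root-preservation of the topological embedding. This is routine but should be spelled out, since the branch sets here are paths whose minimal vertices (in $\le_S$) are exactly the images of the branch vertices of $T$. One should also confirm that ``large'' is used in \cite{BP1} with the same meaning as here (a tree being \emph{small} when every ray is eventually bare, \emph{large} otherwise); granting this, nothing further is required.
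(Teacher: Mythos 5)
Your argument is exactly the one the paper intends: the paper already observes $T\leq^\sharp S\Rightarrow T\leq^* S$, so $[T]_\sharp\subseteq[T]_*$ (and likewise in the rooted case), and combining this with the $\geq 2^{\aleph_0}$ bound for large trees from \cite{BP1} is precisely what lies behind the paper's ``It follows that the Main Theorem holds for all large trees.'' Your additional care about the rooted case — checking that a root-preserving topological embedding yields a model respecting the $\min_S$ tree-order condition — is a sensible thing to flag, but it is the same route the paper takes, just spelled out.
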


\section{Small Trees}

We turn our attention to proving the Main Theorem for small trees: in Section~\ref{sec:rootedSmall} we establish it for rooted trees and then extend it to all trees in Section~\ref{sec:unrootedSmall}. First, we focus on some basic preliminary facts. Given a tree $T$, let $F(T) = \{v\in V(T)\mid \deg(v)>2\}$ and let $\text{inf}(T) = \{v\in V(T) \mid \deg(v) = \infty\}$. If $T$ is small and locally finite, then clearly $F(T)$ must be finite.

\begin{lemma}\label{lem:basics} Let $T,S$ be trees and $\mu:V(T) \to \text{Sub}(S)$ be a model of $T$ in $S$. It follows that 
\begin{enumerate}
\item if $S$ is small then so is $T$,
\item if $v\in F(T)$ then $V(\mu(v)) \cap F(S) \not = \emptyset$, and
\medskip
\item if $v\in \text{inf}(T)$ then $V(\mu(v)) \cap \text{inf}(S) \not = \emptyset$, or there exists a ray in $S$ that is not eventually bare and is contained in $\mu(v)$.
\end{enumerate}
\end{lemma}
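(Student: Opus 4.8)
The plan is to unpack the component-model definition of $\leq^*$ and exploit the fact that a model sends distinct vertices of $T$ to pairwise disjoint components of $S$, with adjacency in $T$ witnessed by an edge of $S$ between the corresponding components. For (1), I would argue contrapositively: suppose $T$ is large, so $T$ has a ray $R = v_1 v_2 \ldots$ that is not eventually bare, meaning $\deg_T(v_n) \geq 3$ for infinitely many $n$. Each $\mu(v_n)$ is a (nonempty) subtree of $S$, the $\mu(v_n)$ are pairwise disjoint, and consecutive ones are joined by an edge of $S$; stringing together a path inside $\mu(v_1)$ to the connecting edge, then a path inside $\mu(v_2)$, etc., produces a ray $R'$ in $S$. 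I then need to see that $R'$ is not eventually bare. Here part (2) does the work: whenever $\deg_T(v_n) \geq 3$, the vertex $v_n$ has a neighbour $w$ off the ray, $\mu(w)$ is disjoint from every $\mu(v_i)$, and the $S$-edge joining $\mu(v_n)$ to $\mu(w)$ has an endpoint $u \in \mu(v_n)$; one checks this endpoint can be taken on (or reached from) the path segment of $R'$ lying in $\mu(v_n)$, giving a branch vertex of $S$ arbitrarily far along $R'$. Hence $R'$ is not eventually bare and $S$ is large, proving (1) in contrapositive form.

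For (2), fix $v \in F(T)$, so $v$ has at least three neighbours $w_1, w_2, w_3$ in $T$. For each $i$ there is an edge $u_i u_i'$ of $S$ with $u_i \in \mu(v)$ and $u_i' \in \mu(w_i)$; since the $\mu(w_i)$ are pairwise disjoint and disjoint from $\mu(v)$, the three edges are distinct and the three endpoints $u_1, u_2, u_3$ lie in the subtree $\mu(v)$ of $S$. If some $u_i$ equals some $u_j$ then that common vertex already has degree $\geq 3$ in $S$ (two edges into $\mu(w_i), \mu(w_j)$ plus, because $\mu(v)$ is a connected subtree containing another of the $u_k$, at least one more edge inside $\mu(v)$ — here I should be slightly careful in the degenerate case $|\mu(v)| = 1$, but then all three $u_i$ coincide and that single vertex has degree $\geq 3$ outright). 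If the $u_i$ are distinct, then in the tree $\mu(v)$ the three paths from a fixed one of them to the other two, together with the three pendant edges, force a vertex of degree $\geq 3$ somewhere in $\mu(v)$ (the median of $u_1, u_2, u_3$ in $\mu(v)$, or one of the $u_i$ if they are collinear, but collinearity still yields a degree-$3$ vertex because of the pendant edges). Either way $F(S) \cap \mu(v) \neq \emptyset$.

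For (3), fix $v \in inf(T)$, so $v$ has infinitely many neighbours $w_1, w_2, \ldots$ in $T$, each contributing a distinct edge $u_i u_i'$ of $S$ with $u_i \in \mu(v)$. If infinitely many of the $u_i$ coincide at a single vertex $u$, then $u$ has infinite degree in $S$ and $u \in inf(S) \cap \mu(v)$. Otherwise, after passing to a subsequence, the $u_i$ are all distinct vertices of the subtree $\mu(v)$ of $S$; taking the union of the paths in $\mu(v)$ from $u_1$ to each $u_i$, I obtain an infinite subtree of $\mu(v)$ all of whose vertices are either branch vertices of $\mu(v)$ or carry a pendant edge leaving $\mu(v)$, and König's lemma (the subtree is infinite and locally... — no, $\mu(v)$ need not be locally finite; but if it is not locally finite we are back in the $inf(S)$ case) produces either an infinite-degree vertex of $S$ in $\mu(v)$, or an infinite ray in $\mu(v)$ that meets infinitely many of the pendant edges and hence is not eventually bare. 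This is the delicate case and the main obstacle: I must carefully organise the dichotomy "some vertex of $\mu(v)$ absorbs infinitely many of the $u_i$" versus "the $u_i$ spread out along a ray," handling non-locally-finite $\mu(v)$ by routing it into the $inf(S)$ alternative, so that in every branch I land on one of the two disjuncts in the statement.
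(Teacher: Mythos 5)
Your arguments for parts (1) and (2) are correct and land where the paper does, but take a somewhat different route. For (2), the paper argues by contradiction: if $F(S)\cap\mu(v)=\emptyset$ then every vertex of the subtree $\mu(v)$ has degree $\le 2$ in $S$, so $\mu(v)$ is a (possibly infinite) path, and only its endpoints can send edges outside $\mu(v)$; hence $\mu(v)$ is adjacent to at most two other branch sets, contradicting $\deg_T(v)\ge 3$. Your positive argument via the three endpoints $u_1,u_2,u_3$ and their median (with the collinear and coincidence cases handled) is also valid, just heavier. For (1), the paper simply asserts it as obvious; your contrapositive argument threading a ray $R'$ through consecutive $\mu(v_n)$ and then invoking (2) to manufacture infinitely many branch vertices of $S$ along $R'$ is a genuine, correct filling-in of that assertion, and both buy the same conclusion.

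Part (3) is where there is a gap, which you yourself flag. The dichotomy you set up is exactly the paper's (infinitely many of the $u_i$ coincide, giving $inf(S)\cap\mu(v)\ne\emptyset$; or, after refining, they are distinct), and routing the non-locally-finite case into the $inf(S)$ alternative is right. But the intermediate claim that the union of the $u_1$--$u_i$ paths is a subtree ``all of whose vertices are either branch vertices of $\mu(v)$ or carry a pendant edge'' is false — long stretches of that subtree can consist of degree-$2$ vertices with no pendant edges — and a bare appeal to K\"onig's lemma does not by itself yield a ray that ``meets infinitely many of the pendant edges.'' To close the gap: root $\mu(v)$ at $u_1$ and, using local finiteness, apply K\"onig to obtain a ray $R=z_0z_1\ldots$ such that each $z_k$ has infinitely many of the (distinct) $u_i$ among its descendants. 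Either infinitely many $u_i$ lie on $R$, in which case each such $u_i$ (past the root) already has degree $\ge 3$ in $S$ (two $R$-edges plus the pendant edge $u_iu_i'$); or all but finitely many $u_i$ lie off $R$, in which case the vertices where the $u_1$--$u_i$ paths leave $R$ are branch vertices of $\mu(v)$, and local finiteness forces infinitely many distinct such branch points along $R$. Either way $R$ is not eventually bare. For what it is worth, the paper's own proof of (3) compresses this into the single sentence ``the vertices $v_i$ span a ray $R$ in $S$ that is not eventually bare,'' so the gap you identify is genuinely present in the source; you should just be aware that the filler is the rooted K\"onig argument above rather than the union-of-paths claim you wrote.
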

\begin{proof} (1) is obvious, and were (2) false, then $\mu(v)$ would be a path and could be connected to at most two other subtrees of $S$.\\

For (3), take any $v\in \text{inf}(T)$ and let $u_i$, $i\in \N$, be any infinite collection of distinct neighbours of $v$. Because $\mu:V(T) \to \text{Sub}(S)$ is a model of $T$ in $S$, there must exist an infinite sequence of distinct edges $w_iv_i \in E(S)$ with $v_i \in V(\mu(v))$ and $w_i\in V(\mu(u_i))$. Therefore, if $V(\mu(v))\cap \text{inf}(S)=\emptyset$ then, as a direct consequence of König's Lemma, the closure of the vertices $\{v_i \mid i \in \N\}$ (i.e., the union of all paths connecting them) spans an infinite, locally finite subtree of $\mu(v)$ . Consequently, this subtree must contain a ray $R$ in $S$ that is not eventually bare.
\end{proof}

Given a collection $A \subseteq V(T)$ we denote $\overline{A}$ as the {\it closure of $A$ in $T$}: $\overline{A}$ is $A$ along with all paths in $T$ joining any pair of vertices in $A$. The following result is key in establishing the Main Theorem for small trees. Briefly, it states that all four equivalence relations (i.e., $\cong, \equiv,\equiv^\sharp , \equiv^*$) coincide on the collection of small locally finite trees.

\begin{theorem}\label{thm:equiv} Let $T,S$ be locally finite and small. It follows that $T\cong S \Leftrightarrow T\equiv^*S$. 
\end{theorem}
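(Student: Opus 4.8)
The plan is to prove the nontrivial direction $T \equiv^* S \Rightarrow T \cong S$ by exploiting the fact that small locally finite trees have only finitely many branch vertices, and then performing an induction on that finite structure. First I would observe that since $T$ and $S$ are small and locally finite, both $F(T)$ and $F(S)$ are finite, and by \Cref{lem:basics}(2) any model $\mu$ of $T$ in $S$ must send $F(T)$ injectively into the finitely many components of $S$ that meet $F(S)$ — in fact, composing the two models witnessing $T \equiv^* S$ gives self-models of $T$ and $S$, and a counting/injectivity argument forces $|F(T)| = |F(S)|$. More importantly, I would argue that the branch vertices, together with the paths joining them — i.e. the closure $\overline{F(T)}$ — form a finite subtree (the ``core'') of $T$, and outside this core $T$ is a disjoint union of rays and finite paths attached at core vertices; likewise for $S$.

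The key structural claim is that a model $\mu: V(T) \to C(S)$ cannot ``compress'' the core: each branch vertex $v \in F(T)$ has $\mu(v)$ containing a branch vertex of $S$, distinct branch vertices go to components containing distinct branch vertices, and the adjacency condition (2) in the definition of a model forces the images of core-adjacent vertices to be linked in $S$ by edges, so the core of $S$ is ``at least as complex'' as the core of $T$. Running this for both $\mu$ and the reverse model $\nu$, and then composing, I would show that on a self-model of $T$ the core must be preserved essentially rigidly: the finite subtree $\overline{F(T)}$ embeds into itself in a degree-respecting way, which for finite trees forces an automorphism. This pins down $\overline{F(T)} \cong \overline{F(S)}$ and, reading off the degrees, matches up the pendant rays and finite pendant paths hanging off each core vertex.

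The final step is a counting argument at each core vertex and along each ray. At a vertex $v$ of the core, the non-core part of $T$ attached at $v$ consists of some number of infinite rays and some multiset of finite pendant trees (which, being small and locally finite with no branch vertices away from $v$, are just paths); a model must map this configuration into the corresponding configuration at the matched vertex of $S$, and since a ray can only be a minor of a ray (of any length $\geq$ something) and a path of length $\ell$ needs a path of length $\geq \ell$, the two-sided minor relation forces the number of rays to agree and the multisets of finite path-lengths to agree. Here I would lean on the rooted case intuition: each pendant configuration is essentially a rooted small locally finite tree, and for such trees the four relations coincide by an easier direct argument (or by an inductive hypothesis on, say, the size of $\overline{F}$). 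Assembling these matchings across all core vertices yields the isomorphism $T \cong S$.

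The main obstacle I anticipate is the bookkeeping around rays: a single ray in $T$ could in principle be ``spread out'' by a model across several components of $S$, or conversely several pendant paths of $S$ could be swallowed into the component $\mu(v)$ of a single vertex $v$, so the degree-counting at core vertices must be done carefully using \Cref{lem:basics}(3) and the precise adjacency structure rather than naive cardinality comparison. In particular I must rule out that a model ``hides'' extra pendant paths of $S$ inside $\mu(v)$ for a core vertex $v$ without that showing up as extra degree or extra branch vertices — this is where smallness (eventual bareness of every ray) is essential, since it prevents any component $\mu(v)$ from absorbing unboundedly much structure while remaining the image of a bounded-degree vertex. Handling this cleanly, probably by an induction on $|\overline{F(T)}|$ with the pendant-configuration matching as the base/step, is the technical heart of the argument.
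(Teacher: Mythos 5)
Your proposal and the paper start from the same observation---using Lemma~\ref{lem:basics}(2) and the two-sidedness of $\equiv^*$ to get $|F(T)|=|F(S)|$, and hence that $V(\mu(v))\cap F(S)$ is a singleton for each $v\in F(T)$---but from there you diverge sharply. The paper uses this to make a short reduction: once each $\mu(v)$ contains exactly one branch vertex of $S$, any edge contraction occurring in the passage from $S$ to $T$ is performed on an edge whose endpoints both have degree at most $2$, i.e.\ it is a deg-2 vertex dissolution. This shows $T\leq^*S\Rightarrow T\leq^\sharp S$, so by symmetry $T\equiv^* S\Rightarrow T\equiv^\sharp S$, and the conclusion $T\cong S$ follows by citing the already-established fact (Theorem~5 of \cite{BP1}) that $\cong$ and $\equiv^\sharp$ coincide on small locally finite trees. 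The whole proof is a few lines.

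You instead attempt to construct the isomorphism directly via a core/pendant decomposition, which amounts to re-proving the rigidity that the paper simply imports from \cite{BP1}. This is not just less efficient---it leaves a genuine gap, and it is exactly the one you flag yourself: there is no clean argument in your sketch ruling out a model $\mu$ absorbing pendant paths or initial segments of rays of $S$ inside $\mu(v)$ for a core vertex $v$, nor one preventing a single pendant ray of $T$ from being spread across several $\mu$-components. The claim that ``a ray can only be a minor of a ray'' and ``a path of length $\ell$ needs a path of length $\geq\ell$'' is true in isolation but does not straightforwardly yield the multiset-matching you want once absorption into branch-vertex components is in play; nor is it established that $\mu$ restricted to $\overline{F(T)}$ genuinely lands inside $\overline{F(S)}$ rather than leaking into pendant parts. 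These are precisely the bookkeeping problems that the paper's reduction to $\equiv^\sharp$ avoids having to confront: once all contractions are known to be dissolutions, one never has to reason about what a component $\mu(v)$ might swallow. If you want to salvage your direct approach, you would in effect need to prove a version of the structural rigidity result for $\equiv^\sharp$ from scratch; the more economical move is to notice, as the paper does, that the singleton-intersection fact already puts you in the $\equiv^\sharp$ world.
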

\begin{proof} 
The authors of \cite{BP1} prove that $T\cong S \Leftrightarrow T\equiv^\sharp S$ (see Theorem 5). In turn, we focus on showing that $T\equiv^*S$ implies $T\equiv^\sharp S$. Suppose that $\mu: V(T) \to Sub(S)$ witnesses $T \leq^* S$ and recall that by Lemma~\ref{lem:basics} for each $v\in F(T)$ it must be that $V(\mu(v))\cap F(S) \not = \emptyset$. Hence, $F(T) \leq F(S)$ and since $T \geq^* S$ we also get that $F(S) \leq F(T)$. It follows that $V(\mu(v))\cap F(S) = \{w\}$ for any $v\in F(T)$ since $|F(S)| = |F(T)|$. It then follows that any edge contraction employed to turn $T$ into $S$ must be performed on an edge $vw \in E(T)$ with both $\deg(v), \deg(w) \leq 2$. Any such edge contraction is a deg-2 vertex dissolution. Hence, $T \leq^\sharp S$ and, by symmetry, $T \geq^\sharp S$. 
\end{proof}

\subsection{RootedTAC($\equiv^*$)} \label{sec:rootedSmall}
 For a rooted tree $(T,r)$ we let $succ(v) $ denote the collection of immediate successors of a vertex $v$ and $(T_v,v)$ denote the full subtree of $(T,r)$ generated by all $w\geq_T v$.

\begin{lemma} For any rooted tree $(T,r)$ and any $v\in V(T)$ we have $|[(T_v,v)]_*|\leq |[(T,r)]_*|$ . 
\end{lemma}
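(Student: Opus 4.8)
The plan is to exhibit an injection from isomorphism classes in $[(T_v,v)]_*$ into isomorphism classes in $[(T,r)]_*$. The natural idea: given any rooted tree $(S,s) \equiv^* (T_v,v)$, graft $S$ onto $T$ in place of the subtree $T_v$. Precisely, let $p$ be the immediate predecessor of $v$ in $T$ (if $v = r$ the statement is trivial), delete $T_v$ from $T$ but keep $p$, and attach $(S,s)$ by adding the edge $ps$; call the resulting rooted tree $(T',r)$. I would first check that $(T',r) \equiv^* (T,r)$: since $(S,s) \equiv^* (T_v,v)$, fix rooted models $\mu: V(T_v) \to C(S)$ and $\nu: V(S) \to C(T_v)$ witnessing this; extend $\mu$ to a model of $(T,r)$ in $(T',r)$ by setting it to the identity singleton $\{x\}$ on each $x \in V(T)\setminus V(T_v)$, and check conditions (1)-(2) of the model definition hold, in particular that the edge $pv$ of $T$ is matched by the edge $ps$ of $T'$ because $s \in \mu(v)$-side and $p$ is fixed; the order-preservation condition holds because $v$ is the $\leq_T$-minimum of $T_v$ and $s$ the $\leq_{T'}$-minimum of $S$. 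Symmetrically $(T,r) \leq^* (T',r)$ using $\nu$, so $(T',r) \equiv^* (T,r)$.

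Next I would verify that the assignment $(S,s) \mapsto (T',r)$ descends to isomorphism classes and is injective on them. If $(S_1,s_1) \cong (S_2,s_2)$ as rooted trees then the corresponding grafted trees $(T_1',r)$ and $(T_2',r)$ are isomorphic, since an isomorphism of the grafted subtrees fixing the roots $s_i$ extends by the identity on $V(T)\setminus V(T_v)$ to an isomorphism of the whole trees fixing $r$. Conversely, suppose $(T_1',r) \cong (T_2',r)$ via an isomorphism $\varphi$. The point is that the grafting is \emph{reversible}: in $T_i'$, the vertex $p$ (the image of $p$, which I should argue is canonically recoverable) and the subtree hanging below it via the edge $ps_i$ recover $(S_i,s_i)$. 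I would recover $(S_i,s_i)$ as $T_i'$ restricted to the set of $w$ with $w \geq_{T_i'} s_i$, where $s_i$ is the unique immediate successor of $p$ lying on the "grafted side" — to make "$p$" and "$s_i$" well-defined I would instead work with the distance from $r$: both $p$ and $v$ (hence $s_i$) sit at fixed distances $d(r,p)$ and $d(r,p)+1$ from the root, and the branch below $s_i$ is exactly the full subtree $(T_i')_{s_i}$; since $\varphi$ fixes $r$ it preserves distance from $r$ and maps $T_1' \setminus (T_1')_{s_1}$ isomorphically onto $T_2'\setminus (T_2')_{s_2}$ — but more care is needed here because there may be several candidate immediate successors of $p$ at the right distance. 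The clean fix is to note that $T_1' \setminus V(T_v \text{-region})$ on both sides is a \emph{fixed} rooted tree $(T \setminus (T_v\setminus\{p\}), r)$ independent of $S$, so I would argue $\varphi$ must restrict to an automorphism-compatible map that identifies the grafted copies; formally, $(S_i,s_i)$ is the isomorphism type of the family of components of $T_i' \setminus \{p\}$ minus the components already present in the fixed part $(T\setminus (T_v\setminus\{p\}),r)$ — taking this "difference of multisets of components" is canonical, so $\varphi$ forces $(S_1,s_1)\cong(S_2,s_2)$.

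**The main obstacle** I anticipate is exactly this last point: making the recovery of $(S,s)$ from $(T',r)$ genuinely canonical when $T$ itself may have nontrivial structure or symmetry near $p$, so that "the branch below $p$ that came from $S$" is well-defined up to isomorphism even though it need not be well-defined as a literal subtree. I expect the argument to go through cleanly once one passes to \emph{multisets of isomorphism types of components after deleting $p$}: the multiset for $T_i'$ equals the multiset for the fixed tree $T$-minus-$T_v$-part, with the type of $T_v$ removed and the type of $S_i$ added; since $\varphi$ fixes $r$ and hence preserves all distances and the deletion of the canonically-identified vertex $p$, the two resulting multisets agree, and cancelling the common (fixed) part yields $[(S_1,s_1)] = [(S_2,s_2)]$. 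Assembling these pieces gives a well-defined injection $[(T_v,v)]_* \hookrightarrow [(T,r)]_*$ on isomorphism classes, hence $|[(T_v,v)]_*| \leq |[(T,r)]_*|$.
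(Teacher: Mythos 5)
Your grafting idea is natural, but it differs from the paper's construction in a way that creates a real gap: you replace only the single subtree $T_v$ by $S$, whereas the paper replaces \emph{every} sibling branch $(T_w,w)$ at $r$ (after reducing to $v\in\mathrm{succ}(r)$) that is $\equiv^*$-equivalent to $(T_v,v)$ by a copy of $(S,s)$. That difference is exactly what makes recovery canonical in the paper and non-canonical in your version.

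The multiset-cancellation step does not go through. Suppose there are non-isomorphic rooted trees $A$ and $B$ with $A\equiv^*B$, and that $T$ consists of a root $r$ with infinitely many children whose subtrees are isomorphic to $A$ together with infinitely many whose subtrees are isomorphic to $B$, with $v$ the root of one of the $A$-branches. Take $(S_1,s_1)\cong A$ and $(S_2,s_2)\cong B$; both lie in $[(T_v,v)]_*$. Your grafted trees $T_1'$ and $T_2'$ then both have infinitely many $A$-branches and infinitely many $B$-branches at $r$, so $T_1'\cong T_2'\cong T$ even though $S_1\not\cong S_2$. Concretely, ``multiset minus fixed part'' is not well defined when the fixed part already contains the relevant isomorphism type with infinite multiplicity: from $M\cup\{[S_1]\}=M\cup\{[S_2]\}$ with $M([S_1])=M([S_2])=\aleph_0$ one cannot conclude $[S_1]=[S_2]$. (There is also a secondary issue that a rooted isomorphism $T_1'\to T_2'$ need not fix $p$ when $T$ has symmetry, so ``the components of $T_i'\setminus\{p\}$'' is itself not canonical before you even reach the cancellation.) Replacing all $\equiv^*$-equivalent branches simultaneously sidesteps both problems: in the resulting tree every branch at $r$ that is $\equiv^*$-equivalent to $(T_v,v)$ is isomorphic to $(S,s)$, and any rooted isomorphism between two such grafted trees must match these branches up, immediately forcing $(S_1,s_1)\cong(S_2,s_2)$.
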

\begin{proof} It suffices to prove the claim for any vertex in $succ(r)$. Hence, fix a $v\in succ(r)$ and take any $(S,s) \in [(T_v,v)]_*$. Construct the tree $(T',r)$ as follows: for any $w\in succ(v)$
\begin{itemize}
\item replace $(T_w,w)$ with $(S,s)$ if $(T_w,w) \equiv^* (S,s)$, and
\item leave $(T_w,w)$ as it is, otherwise.
\end{itemize}
By design, $(T',r) \equiv^* (T,r)$ and $(T',r) \not \cong (T,r)$.
\end{proof}

In view of the preceding lemma, assume for contradiction that there exists a small tree $(T,r)$ with a finite, non-trivial equivalence class (i.e., $1 < |[(T,r)]_*| < \aleph_0$). Suppose that every vertex $v$ with $1 < |[(T_v, v)]_*| < \aleph_0$ has at least one successor $w \in succ(v)$ such that $1 < |[(T_w, w)]_*| < \aleph_0$. We could then greedily choose these successors to construct an infinite increasing sequence $r <_T v_1 <_T v_2 <_T \ldots$ where every vertex roots a subtree with an equivalence class strictly greater than 1. However, because $T$ is a small tree, any infinite increasing sequence of vertices must span an eventually bare ray. Consequently, the sequence of subtree equivalence class sizes along any ray must eventually reduce to a tail of 1s (as bare paths have a trivial equivalence class). This contradicts the existence of our infinite sequence. Therefore, the chain must terminate: there must exist some vertex $u \geq_T r$ such that $1 < |[(T_u, u)]_*| < \aleph_0$, but for all $w \in succ(u)$, $|[(T_w, w)]_*| = 1$.In light of this observation, we finish this section by proving that this configuration is strictly impossible, thereby proving that no small tree can have a finite, non-trivial equivalence class.

\begin{theorem} For any small tree $(T,r)$ with $|[(T_v, v)]_*| =1$, $\forall v \in succ(r)$, it follows that $|[(T,r)]_*| =1$ or $\infty$. 
\end{theorem}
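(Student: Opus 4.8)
The plan is to analyze the structure of a small rooted tree $(T,r)$ under the hypothesis that every subtree $(T_v,v)$, $v\in succ(r)$, is minor-rigid (i.e. $|[(T_v,v)]_*|=1$), and show that if some competitor $(S,s)\equiv^* (T,r)$ is non-isomorphic to $(T,r)$, then in fact infinitely many pairwise non-isomorphic such competitors exist. First I would record what the rigidity hypothesis buys us: by Theorem~\ref{thm:equiv} (applied subtree-by-subtree, after noting each $(T_v,v)$ is small and, crucially, that we may reduce to the locally finite case — see the next paragraph) the subtrees hanging off $r$ come in finitely many isomorphism types, and two of them are $\equiv^*$-equivalent iff they are isomorphic. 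So the ``shape'' of $(T,r)$ near the root is just a multiset of rigid isomorphism types, each with some multiplicity in $\{1,2,\dots,\aleph_0\}$, together with whatever finite pendant structure $\overline{F(T)\cup\{r\}}$ contributes.

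Next I would take a candidate $(S,s)\equiv^*(T,r)$ with $(S,s)\not\cong(T,r)$ and chase the two witnessing rooted models $\mu:V(T)\to C(S)$ and $\nu:V(S)\to C(T)$, using Lemma~\ref{lem:basics}(2) and the root-preservation clause. As in the proof of Theorem~\ref{thm:equiv}, $|F(T)|=|F(S)|$ and the branch vertices correspond bijectively; because everything is small, outside a finite core both trees are disjoint unions of eventually bare rays attached along that core, and the root sits in the core. The upshot I expect is: $(S,s)$ is obtained from $(T,r)$ by (a) permuting/relabeling isomorphic rigid branches, (b) possibly changing the lengths of finitely many bare segments, and (c) possibly moving finitely many branches between attachment points — all subject to the constraint that the operation can be undone by a model going the other way. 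The non-isomorphism means at least one of these changes is genuine. Then I would argue that any such genuine change can be iterated: e.g. if a rigid branch type $X$ occurs with infinite multiplicity at $r$ one can graft arbitrarily long bare tails or extra copies in $S$-like ways to manufacture infinitely many types; if instead all multiplicities at $r$ are finite, the root-preserving models force $(S,s)\cong(T,r)$ outright (contradiction), unless the discrepancy lives in a bare ray, in which case lengthening that ray by $1,2,3,\dots$ yields the infinite family. This case split — finite-multiplicity forces rigidity, infinite-multiplicity furnishes room — is the heart of the argument.

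Two technical points need care. The first is reducing to locally finite trees: a small tree can still have vertices of infinite degree, but by Lemma~\ref{lem:basics}(3) a vertex of infinite degree in $T$ forces, via $\mu$, either an infinite-degree vertex or a non-bare ray in the image; since $S$ is small the latter is impossible, so $inf(T)$ and $inf(S)$ correspond, and one handles the infinitely-many bare rays attached at such a vertex as a multiset of ray-lengths — combinatorially this is again ``a multiset of rigid types (here: finite paths) with multiplicities,'' so the same dichotomy applies. I would phrase the whole proof so that ``rigid branch type'' covers both genuine rigid subtrees and finite bare-path stubs uniformly. The second point is that the rooted models must respect $\leq_T$, which is exactly what prevents the pathological unrooted counterexamples of LaFlamme et al.\ and pins down the attachment combinatorics; I would lean on it to show that the only freedom is in multiplicities and bare-segment lengths.

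The main obstacle I anticipate is (c): ruling out — or rather, exploiting — the reshuffling of branches among distinct branch vertices while keeping the model rooted and invertible. Showing that such a reshuffle, if it produces a non-isomorphic $(S,s)$ at all, can be performed in infinitely many inequivalent ways (rather than being a sporadic one-off, which is precisely how a finite non-trivial class would arise) is the crux; I expect to handle it by isolating the ``lowest'' branch vertex at which $T$ and $S$ differ and grafting an unbounded bare extension there, checking that rigidity of the surrounding subtrees guarantees the resulting trees remain $\equiv^*$-equivalent to $(T,r)$ and pairwise non-isomorphic.
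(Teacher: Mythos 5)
Your proposal starts at the same place as the paper — since each $(T_v,v)$ for $v\in succ(r)$ is $\equiv^*$-rigid, the comparison between $(T,r)$ and a non-isomorphic $(S,s)\in[(T,r)]_*$ reduces to comparing the multiplicities with which each isomorphism type of branch occurs at the root. (One small overclaim: you do not need Theorem~\ref{thm:equiv} to see that two branches are $\equiv^*$-equivalent iff isomorphic; that is literally the hypothesis $|[(T_v,v)]_*|=1$. And the branch types need not be finitely many, since $r$ may have infinite degree.) But from that point the proposal diverges and contains a genuine gap.

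The critical error is the claim that ``if all multiplicities at $r$ are finite, the root-preserving models force $(S,s)\cong(T,r)$ outright.'' This is false, and in fact the hardest part of the paper's proof is exactly the finite-multiplicity case. Pick a type $\beta$ with $\eta=|f_T^{-1}(\beta)|<\zeta=|f_S^{-1}(\beta)|$, both finite. The model $\mu_S$ sends the $\zeta$ branches of $S$ of type $\beta$ into $\eta$-or-fewer branches of $T$, so either (Case 1) some branch $(T_i,i)$ of type $\beta$ absorbs at least two of them, in which case a rigid branch contains two disjoint models of itself, and an inductive argument then produces $\aleph_0$ disjoint models inside it; or (Case 2) some $(S_j,j)$ of type $\beta$ lands inside a branch $(T_v,v)$ of a \emph{different} type, and the paper then iterates the composed model $\mu_S\circ^*(\mu_T\circ^*\mu_S)^n$ to build an infinite non-repeating chain of subtrees $(T_{v_n},v_n)$, which is used to explicitly model $(T'_\lambda,\lambda)$ (the tree obtained by attaching $\lambda$ extra copies of a type-$\beta$ branch) inside $(T,r)$ for every $\lambda$. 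Neither of these mechanisms appears in your proposal; instead you suggest ``grafting arbitrarily long bare tails,'' which there is no reason to believe preserves $\equiv^*$-equivalence with $(T,r)$ — that move changes the branch rather than changing a multiplicity of an existing branch type. Also missing is the infinite-multiplicity case: when $\zeta$ is infinite and $\zeta\neq|A_i|$ for every $i$, the paper argues $\zeta$ must be singular and uses the $\aleph_0$-many cardinals in $[\eta,\zeta]$ to manufacture the infinite family. In short, the reduction to multiplicities is right, but you have not supplied the two engines (disjoint self-models via pigeonhole/induction in Case 1; iterated model composition in Case 2) that actually convert a single discrepancy into infinitely many non-isomorphic members of $[(T,r)]_*$.
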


\begin{proof} Let $(T,r)$ and $(S,s)$ be two non-isomorphic trees with $(S,s)\in [(T,r)]_*$, and models $\mu_T: V(T) \to \text{Sub}(S)$ and $\mu_S: V(S) \to \text{Sub}(T)$. Further assume that $|[(T_v, v)]_*| =1$ for any $v \in succ(r)$. We seek to prove that $|[(T,r)]_*| \geq \aleph_0$. 

Put $\mathcal{I}= \{(X_\alpha,\alpha) \mid \alpha\in \kappa\}$  as the set of all isomorphism types of all rooted trees $(T_v,v)$ and $(S_w,w)$ with $v\in succ(r)$  and $w\in succ(s)$. Define $f_T: \{(T_v,v)\mid v\in succ(r)\} \to \kappa$ so that $f_T(T_v,v) = \alpha$ provided $(T_v,v) \cong (X_\alpha, \alpha)$ (resp. $f_S: \{(S_w,w)\mid w \in succ(s)\} \to \kappa$ so that $f_S(S_w, w) = \alpha$ provided $(S_w, w) \cong (X_\alpha, \alpha)$). Since $(T,r) \not \cong (S,s)$ there must exist a $\beta \in \kappa$ for which, without loss of generality, $ |f_T^{-1}(\beta)| < |f_S^{-1}(\beta)|$. Put $T_B = f_T^{-1}(\beta)$ and $S_B = f_S^{-1}(\beta)$, and $\eta = |f_T^{-1}(\beta)|$ and $\zeta = |f_S^{-1}(\beta)|$. Enumerate the elements of $T_B$ and $S_B$ as $(T_i,i)$ and $(S_j,j)$ with $i\in \eta$ and $j\in \zeta$. We are then left with the following complementary scenarios.\\

\noindent
{\bf CASE 1:} $\forall j \in \zeta$, $\exists i_j \in \eta$ such that $V(\mu_S(S_j)) \subseteq V(T_{i_j})$\\ 

\noindent
{\bf CASE 2:} $\exists j \in \zeta$ with $V(\mu_S(S_\gamma)) \subseteq V(T_v)$, $(T_v, v) \not \in f_T^{-1}(\beta)$. \\

Before investigating the above cases we establish a simple yet useful result.

\begin{lemma}\label{lem:root_anchor} Let $(T_1, r_1)$ and $(T_2, r_2)$ be isomorphic small rooted trees. If $\mu: V(T_1) \to \text{Sub}(T_2)$ is a model witnessing $(T_1, r_1) \leq^* (T_2, r_2)$ such that $r_2 \notin V(\mu(r_1))$, then $(T_2, r_2)$ must be a bare ray or a finite path.\end{lemma}\begin{proof}Suppose $r_2 \notin V(\mu(r_1))$. Because $V(\mu(r_1))$ is a connected subset that does not contain the root $r_2$, the entire image of $T_1$ under $\mu$ must be mapped strictly "above" $r_2$, meaning it is contained within some proper principal subtree $(T_{2,v}, v)$ for a successor $v \in succ(r_2)$. Because $(T_1, r_1) \cong (T_2, r_2)$, this embedding implies that $(T_2, r_2)$ can be minor-embedded into a proper principal subtree of itself. Iterating this embedding generates an infinite sequence of vertices $r_2 <_T v = x_1 <_T x_2 <_T \ldots$, where each $x_n$ roots a nested isomorphic copy of the tree. This strictly increasing sequence of vertices spans a ray $R$ in $T_2$. Clearly, $R$ is a bare ray precisely when $(T_2, r_2)$ is a bare ray.\end{proof}


\noindent
{\bf CASE 1:}
Observe that $\zeta = \sum_{i \in \eta} |A_i|$, where$$A_i = \{(S_j,j) \in f_S^{-1}(\beta) \mid V(\mu_S(S_j)) \subseteq V(T_i)\}.$$Because $\eta < \zeta$ there must exist at least one $\gamma \in \eta$ such that $|A_\gamma| \geq 2$. Recall that $(T_\gamma, \gamma) \cong (S_j, j)$ and for all $j \in \zeta$. The fact that $|A_\gamma| \geq 2$ implies that the subtree $(T_\gamma, \gamma)$ contains at least two disjoint models of itself. We can then apply Theorem~\ref{lem:root_anchor} to obtain a contradiction.\\

\noindent
{\bf CASE 2:} First, let $(T', r)$ be the rooted tree obtained from $(T, r)$ by deleting all $\eta$ principal subtrees that are isomorphic to $(S_\gamma, \gamma)$. That is, all trees isomorphic to $(S_\gamma, \gamma)$ rooted at a successor or $r$, along with their root and, thus, edge joining them to $r$. For any cardinal $\lambda$, we define the tree $(T'_\lambda, r)$ as the tree $(T', r)$ with exactly $\lambda$ new, disjoint copies of $(S_\gamma, \gamma)$ attached to the root $r$.\\

First consider the case where $\eta$ is finite (tacitly including the case where $\eta = 0$). Fix a $\gamma \in \zeta$ for which $\mu(S_\gamma,\gamma) \subseteq (T_v,v)\not \in f_T^{-1}(\beta)$. Next, for each $n\in \omega$, put $w_n \in succ(s)$ so that $(\mu_T\circ^*\mu_S)^n[(S_\gamma,\gamma)] \subseteq (S_{w_n}, w_n)$ and $v_n\in succ(r)$ with $\mu_S\circ^*(\mu_T\circ^*\mu_S)^n[(S_\gamma,\gamma)] \subseteq (T_{v_n}, v_n)$. Clearly, $(S_{w_0},w_0) = (S_\gamma,\gamma)$ and $(T_{v_0}, v_0) = (T_v, v)$, and 
\[
(S_{w_0},w_0) \leq^* (T_{v_0},v_0) \leq^*(S_{w_1},w_1) \leq^*(T_{v_1},v_1) \leq^* \ldots
\]
 Observe that $w_n \not = w_0$ for any $n\in \omega$ since then $(S_\gamma, \gamma) \cong (T_v,v)$ from the assumption that $|[(T_u, u)]_*| =1$ for any $u \in succ(r)$. In general, we claim that the sequences $(w_n)_{n \in \omega}$ and $(v_n)_{n \in \omega}$ consist of strictly non-repeating elements. Suppose for contradiction that the sequence loops. Without loss of generality, assume $w_1 = w_2$. The resulting mutual embedding forces $(S_{w_1}, w_1) \equiv^* (T_{v_1}, v_1)$. Under our assumption that $|[(T_u, u)]_*| = 1$ for all immediate successors, this equivalence guarantees strict isomorphism: $(S_{w_1}, w_1) \cong (T_{v_1}, v_1)$. Theorem~\ref{lem:root_anchor} states that $v_1 \in V(\mu_T(w_1))$. Since $\mu_T$ also maps $(T_{v_0},v_0)$ into $(S_{w_1}, w_1)$ and $v_0$, $v_1$ are incomparable in $T$, then $\mu_T$ would fail to preserve the meet semilattice order of $T$. Thus, we get two sequences of non-repeating elements: $(w_n)_{n\in\omega}$ and $(v_n)_{n\in\omega}$. \\

\noindent
We finish the case where $\eta$ is finite as follows. Clearly, $(T'_{\lambda}, r) \geq^* (T,r)$ for each $\lambda \geq \eta$. Working in the other direction, given any finite $\lambda \geq \eta$, let $u_1, \dots, u_\lambda \in succ(r)$ be the roots of the $\lambda$ new principal subtrees in $T'_\lambda$ that are isomorphic to $(S_\gamma, \gamma)$. We define the model $\mu_\lambda: V(T'_{\lambda}) \to \text{Sub}(T)$ piecewise as follows:
\begin{itemize}
\item For each new copy $1 \leq l \leq \lambda$, let 

\[
\mu_\lambda \restriction T'_{u_l} = \mu_S \circ^* (\mu_T \circ^* \mu_S)^{l-1},
\]

\item For the original chain branches $n \in \omega$, let 

\[
\mu_\lambda \restriction T_{v_n} = (\mu_S \circ^* \mu_T)^{\lambda},
\]

\item Let $\mu_\lambda$ act as the identity mapping everywhere else in $T'_\lambda$.
\end{itemize}

\noindent
Since $(T'_{u_l}, u_l) \cong (S_\gamma, \gamma)$, this mapping embeds the $\lambda$ new copies into the first $\lambda$ slots of the chain, while the operator $(\mu_S \circ^* \mu_T)^\lambda$ shifts the original branches downward without collision. Thus, $\mu_\lambda$ is a valid model.

The case for $\eta$ infinite follows a similar flavour to the finite case; we prove that $(T', r) \equiv^* (T,r)$ (and thus, $(T'_\lambda, r) \equiv^* (T,r)$ for all cardinals $\lambda \leq \eta$). Clearly, we need only show $(T, r) \leq^* (T',r)$. Since $\zeta > \eta$ (and both are infinite), then there exists a subset $S'_B \subset S_B$ with $|S'_B| = \zeta$ such that for every $(S_w, w) \in S'_B$, its image under $\mu_S$ is entirely contained within a strictly non-$\beta$-type principal subtree of $T$. Because $|T_B| = \eta < \zeta = |S'_B|$, there exists an injection $m: T_B \to S'_B$. Let $N_{recv}$ denote the set of non-$\beta$-type principal subtrees $(T_u, u)$ in $T$ that contain the image $\mu_S(m(T_v, v))$ for some $(T_v, v) \in T_B$. 

Finally, we define the model $\mu: V(T) \to \text{Sub}(T')$ piecewise as follows:
\begin{enumerate}
\item For each $(T_v, v) \in T_B$, let $\phi_v: T_v \to m(T_v)$ be an isomorphism. We set $\mu \restriction T_v = \mu_S \circ^* \phi_v$.
\item For each receiving non-$\beta$-type branch $(T_u, u) \in N_{recv}$, we evacuate the native infrastructure by setting $\mu \restriction T_u = (\mu_S \circ^* \mu_T)^2 \restriction T_u$.
\item Let $\mu$ act as the identity mapping on all remaining vertices of $(T, r)$.
\end{enumerate}

For the exact same reasons outlined in the finite case ($\eta \in \omega$), the shifted native infrastructure in Part (2) strictly does not clash with the incoming $\beta$-copies mapped in Part (1). This establishes that $\mu$ is a valid graph minor model, yielding $(T, r) \leq^* (T', r)$ and completing the proof.

\end{proof}
\subsection{Unrooted}\label{sec:unrootedSmall}
We are now ready to establish Theorem 1 for all unrooted trees. We do this by proving Theorem~\ref{thm:fixed} as an adaptation of a fixed-point theorem of Polat and Sabidussi from \cite{P}. To that end, we make use of Lemma 1.1 from \cite{P}.

\begin{lemma}\label{lem:Polat}[Lemma 1.1, \cite{P}] Let $G$ be a rayless graph and $(A_\alpha)_{\alpha\in \text{Ord}}$ a decreasing sequence of subsets of $V(G)$ such that:
\begin{enumerate}
\item $A_\alpha = \bigcap_{\beta < \alpha} A_\beta$ for limit ordinals $\alpha$, and
\item each $A_\alpha$ induces a connected subgraph of $G$.
\end{enumerate}
Let $\kappa$ denote the smallest ordinal for which the sequence $(A_\alpha)_{\alpha\in \text{Ord}}$ becomes constant. If $A_\kappa =\emptyset$, then $\kappa$ is a successor ordinal.
\end{lemma}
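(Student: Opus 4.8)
The plan is to prove the contrapositive: I will assume that $A_\kappa = \emptyset$ while $\kappa$ is a limit ordinal and construct a ray in $G$, contradicting raylessness; this forces $\kappa$ to be a non-limit, hence (being nonzero, as seen below) a successor. Two preliminary observations set up the argument. First, if $\kappa$ is a limit ordinal then by minimality of $\kappa$ every $A_\alpha$ with $\alpha < \kappa$ is nonempty, since otherwise the decreasing sequence would already be constantly empty from that stage on, contradicting that $\kappa$ is least with the sequence constant; in particular $A_0 \neq \emptyset$. Second, since $A_\kappa = \emptyset$, every $v \in A_0$ leaves the sequence at some stage, so let $\mu(v)$ be the least $\alpha$ with $v \notin A_\alpha$; condition (1) (continuity of the sequence at limit stages, i.e. $A_\delta = \bigcap_{\beta<\delta} A_\beta$ for limit $\delta$) forces $\mu(v)$ to be a successor, say $\mu(v) = r(v)+1$, so that $v \in A_{r(v)} \setminus A_{r(v)+1}$. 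Because $\mu(v)$ is a successor ordinal at most $\kappa$ and $\kappa$ is a limit, $r(v) < \kappa$ and hence also $r(v)+1 < \kappa$.

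Next I would build recursively a sequence of vertices $v_0, v_1, \dots$, with ranks $\rho_n := r(v_n)$, together with finite paths $S_n$ from $v_n$ to $v_{n+1}$. Start with any $v_0 \in A_0$. Given $v_n$ with $\rho_n < \kappa$, pick any $u \in A_{\rho_n + 1}$ (nonempty since $\rho_n + 1 < \kappa$). As $A_{\rho_n}$ is connected and contains both $v_n$ and $u$ (because $A_{\rho_n+1} \subseteq A_{\rho_n}$), choose a path $Q$ in $A_{\rho_n}$ from $v_n$ to $u$, let $v_{n+1}$ be the first vertex of $Q$, traversing from $v_n$, that lies in $A_{\rho_n+1}$, and let $S_n$ be the initial segment of $Q$ from $v_n$ to $v_{n+1}$. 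Then $v_{n+1} \in A_{\rho_n+1}$, so $\rho_{n+1} = r(v_{n+1}) \geq \rho_n + 1 > \rho_n$. Thus the ranks strictly increase, the construction never stalls, and it produces infinitely many vertices.

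The crux, and the only point that needs care, is checking that the concatenation $S_0 S_1 S_2 \cdots$ is a genuine ray, i.e. has no repeated vertices; here the rank stratification does the work. By construction every vertex of $S_n$ other than $v_{n+1}$ lies in $A_{\rho_n} \setminus A_{\rho_n+1}$, whereas for every $m > n$ the whole of $S_m$ lies in $A_{\rho_m} \subseteq A_{\rho_n+1}$ (using $\rho_m \geq \rho_{n+1} \geq \rho_n + 1$). Hence $S_n$ and $S_m$ can only meet at $v_{n+1}$; and since $v_{n+1} \in A_{\rho_{n+1}} \setminus A_{\rho_{n+1}+1}$ while $S_m \subseteq A_{\rho_{n+1}+1}$ for $m > n+1$, we get $S_n \cap S_m = \emptyset$ for $m > n+1$ and $S_n \cap S_{n+1} = \{v_{n+1}\}$. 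Therefore $S_0 S_1 \cdots$ is a one-way infinite path, contradicting that $G$ is rayless, and so $\kappa$ is not a limit ordinal; being nonzero, it is a successor.

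I expect the main obstacle to be exactly this gluing step. The naive approach of simply chaining connecting paths between successive sets $A_\alpha$ risks revisiting vertices, so the argument would collapse into producing a walk rather than a ray. Isolating the rank function $r$ and observing that every later segment is buried deep inside a set the earlier segment has already exited is precisely what guarantees that the union is a genuine ray, and I would devote the bulk of the written proof to making that bookkeeping airtight.
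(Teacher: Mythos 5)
The paper does not prove this lemma itself; it cites it verbatim from Polat and Sabidussi, so there is no in-paper argument to compare against. Judged on its own, your proof is correct and is essentially the standard ray-building argument for this kind of fixed-point lemma. You correctly read the continuity hypothesis as $A_\alpha = \bigcap_{\beta<\alpha} A_\beta$ at limit stages, which is clearly what is intended --- as printed with $\bigcup$, a decreasing sequence would be forced to equal $A_0$ from the first step and the lemma would be vacuous. That reading is what makes the exit ordinal $\mu(v)$ of each vertex a successor strictly below a limit $\kappa$, and the rank stratification $r(v)=\mu(v)-1$ is then exactly the right device: each $S_n$, apart from its terminal vertex $v_{n+1}$, sits in $A_{\rho_n}\setminus A_{\rho_n+1}$, every later segment is buried in $A_{\rho_n+1}$, and $v_{n+1}$ itself drops out of $A_{\rho_{n+1}+1}$, which contains all $S_m$ with $m>n+1$; so the concatenation is a genuine ray, not merely a walk, contradicting raylessness.

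The one loose end is your parenthetical that $\kappa$ is nonzero ``as seen below.'' When $\kappa=0$ the index set $\{\alpha<\kappa\}$ is empty, so the observation that all $A_\alpha$ with $\alpha<\kappa$ are nonempty is vacuous and does not give $A_0\neq\emptyset$. This is harmless under the convention that $0$ is not a limit ordinal, or if one notes that the empty intersection in condition (1) forces $A_0=V(G)$, so $\kappa=0$ only for the empty graph; but the claim should be justified rather than asserted.
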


Let $\text{GME}(T)$ denote the collection of all graph minor models of $T$ in $T$, and let $\mu \in \text{GME}(T)$. By Lemma~\ref{lem:basics}, since $T$ is small, for any $v\in \text{inf}(T)$, the branch set $\mu(v)$ contains at least one vertex of infinite degree. While $\mu(v)$ may also contain vertices of finite degree lying outside of $\overline{\text{inf}(T)}$, the intersection of their vertex sets, $V(\mu(v)) \cap V(\overline{\text{inf}(T)})$, induces a non-empty, connected subgraph in $T$. Furthermore, any edge connecting adjacent branch sets must lie entirely within $\overline{\text{inf}(T)}$ to maintain the connectivity of their union. It follows that the restricted mapping $v \mapsto T[\,V(\mu(v)) \cap V(\overline{\text{inf}(T)})\,]$ (i.e., the induced subgraph of $T$ on the defined vertex set)  rigorously defines a valid model of $\overline{\text{inf}(T)}$ strictly within $\overline{\text{inf}(T)}$. It is then relatively straightforward to apply Lemma~\ref{lem:Polat} to obtain the following fixed-element result.

\begin{theorem}\label{thm:fixed} Given a small tree $T$, there exists a vertex or edge of $T$ fixed by all $\mu \in \text{GME}(T)$.
\end{theorem}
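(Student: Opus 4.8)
The plan is to mimic the classical Polat--Sabidussi fixed-vertex argument, but adapted to the graph minor setting and to the structure of small trees. The key observation, already noted in the paragraph preceding the statement, is that any self-model $\mu \in GM(T)$ restricts to a self-model of the ``infinite core'' $\overline{inf(T)}$, because by Lemma~\ref{lem:basics}(3) and smallness of $T$ each $\mu(v)$ with $v \in inf(T)$ must contain a vertex of infinite degree, i.e.\ a vertex of $inf(T)$, and the closure operation then forces $\mu(v) \cap \overline{inf(T)}$ to carry the connecting edges. So the first step is to reduce to the case where $T = \overline{inf(T)}$; if $inf(T) = \emptyset$ then $T$ is locally finite and small, hence $F(T)$ is finite, and one argues directly that the (finite, nonempty) subtree spanned by $\overline{F(T)}$ together with the attached bare rays has a fixed vertex or edge — a finite tree always has a center which is a vertex or an edge, and any self-model must permute $F(T)$ (by Lemma~\ref{lem:basics}(2) and the cardinality-matching trick from Theorem~\ref{thm:equiv}), hence fixes its center.

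Second, in the case $T = \overline{inf(T)}$, I would set up the transfinite derived-subgraph sequence to invoke Lemma~\ref{lem:Polat}. Define $A_0 = V(T)$ and, roughly, $A_{\alpha+1}$ to be the set of vertices of $A_\alpha$ that survive after deleting the ``peripheral'' parts of the subgraph induced by $A_\alpha$ — concretely, one removes vertices lying on rays or in rayless ``ends'' so that what remains is the intersection of $A_\alpha$ with $\overline{inf}$ of the induced subgraph, taking unions at limits. The point of restricting first to $\overline{inf(T)}$ (which is itself rayless, since a small tree's rays are eventually bare and so get absorbed — wait, one must be careful: $\overline{inf(T)}$ need not be rayless in general, but for a \emph{small} tree every ray is eventually bare, hence leaves $inf(T)$, so $\overline{inf(T)}$ is indeed rayless) is precisely to make Lemma~\ref{lem:Polat} applicable: the sequence stabilizes at some ordinal $\kappa$ with $A_\kappa$ connected and, by the lemma, if it were empty then $\kappa$ would be a successor, which one shows is impossible by the construction (the operation at a successor stage removes nothing once no infinite-degree vertices can be peeled off). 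Hence $A_\kappa \neq \emptyset$, and it is a finite subtree invariant under every $\mu \in GM(T)$: invariance follows because each step of the derivation is defined intrinsically and self-models preserve $inf$ and closures. Then its center — a vertex or an edge — is the desired fixed element.

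The main obstacle I expect is making the ``invariant derivation'' genuinely invariant under graph minor self-models rather than just under automorphisms. A model $\mu$ can contract long bare paths, so it does not literally map $A_\alpha$ into $A_\alpha$ vertex-for-vertex; what one needs is that the image $\mu[A_\alpha]$ spans a connected subgraph whose own infinite core coincides (as a subtree of $T$) with $A_{\alpha+1}$, using that $\mu$ both restricts to and ``co-restricts'' a self-model of $\overline{inf(T)}$ because $T \equiv^* T$ gives a model in the other direction and the cardinality-matching argument of Theorem~\ref{thm:equiv} pins down $|inf(T)|$ and forces $V(\mu(v)) \cap inf(T)$ to be a singleton for $v \in inf(T)$. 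Establishing this singleton/bijection property at every derived stage — i.e.\ that $\mu$ induces a genuine bijection of the finitely-branching ``infinite skeleton'' at each level — is the technical heart; once it is in place the fixed vertex-or-edge is just the center of the resulting finite invariant tree, obtained exactly as in the finite case and in Polat--Sabidussi.

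\begin{remark}
The two reductions above — first to $\overline{inf(T)}$, then the transfinite peeling — are what let us quote Lemma~\ref{lem:Polat} verbatim; the only new content over \cite{P} is checking that the peeling operation and the notion of ``infinite core'' are preserved by graph-minor self-models, which is where Lemma~\ref{lem:basics} and the counting argument from the proof of Theorem~\ref{thm:equiv} are used.
\end{remark}
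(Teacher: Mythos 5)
Your overall strategy is the paper's: restrict to the ``infinite core'' $\overline{inf(T)}$, iterate the peeling $T \mapsto \overline{inf(T)}$ transfinitely, invoke Lemma~\ref{lem:Polat}, land on a finite invariant subtree, and take its center (the paper phrases this last step as Theorem~\ref{thm:equiv} plus Halin's fixed-element theorem, but for a finite tree that is the same thing). However, there is a genuine error in how you apply Lemma~\ref{lem:Polat}, and it makes the key step of your argument internally inconsistent.

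You claim the peeling sequence $(A_\alpha)$ stabilizes at a \emph{nonempty} $A_\kappa$, arguing that ``if it were empty then $\kappa$ would be a successor, which is impossible by the construction.'' This reverses the intended use of the lemma. The sequence \emph{does} reach $\emptyset$: if $A_\alpha$ is rayless and nonempty it has a vertex of degree $\le 1$, which cannot lie in $\overline{inf(A_\alpha)}$, so the operation $\overline{inf(\cdot)}$ is strictly decreasing on nonempty rayless trees and the decreasing transfinite sequence of subsets of a rayless tree must bottom out at $\emptyset$. In fact your own conclusion is self-contradictory: you assert $A_\kappa$ is a nonempty \emph{finite} subtree that is a fixed point of the peeling, but a finite tree has $inf(A_\kappa) = \emptyset$, so $\overline{inf(A_\kappa)} = \emptyset \neq A_\kappa$ --- no nonempty finite tree can be a fixed point of this operation. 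What Lemma~\ref{lem:Polat} actually buys is that the first ordinal $\kappa$ with $A_\kappa = \emptyset$ is a \emph{successor} $\lambda + 1$; then the penultimate stage $A_\lambda$ is nonempty, rayless, and satisfies $\overline{inf(A_\lambda)} = \emptyset$, hence $A_\lambda$ is locally finite and rayless and therefore finite. It is $A_\lambda$, not $A_\kappa$, that carries the fixed center. Your argument should be rewritten to look one stage back.

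Two smaller points. First, your Step 1 (the case $inf(T) = \emptyset$) proposes taking the center of ``$\overline{F(T)}$ together with the attached bare rays''; you should drop the rays, since otherwise the subtree is infinite and has no center --- you want the center of $\overline{F(T)}$ alone, which is finite because $T$ is locally finite and small. (Both your approach and the paper's proof implicitly leave aside the degenerate case $F(T) = \emptyset$, e.g.\ $T$ a double ray, but that is not a difference between you.) Second, your honest flag that the real technical content is making $\mu \upharpoonright T_1$ a bona fide self-model of $T_1$ --- because $\mu(v)$ need not lie inside $T_1$ and bare paths can be contracted --- is exactly right; the paper asserts this restriction works via Lemma~\ref{lem:basics}(3) but does not spell it out, so you would need to supply that verification just as much as the paper would.
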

\begin{proof}
Let $T$ be small and infinite, and set $T_1 = \overline{\text{inf}(T)}$. Observe that $T_1$ is connected, rayless, and since $T_1$ does not contain any end-points of $T$, $T_1$ is properly contained in $T$. Also, as observed previously, for any $\mu \in \text{GME}(T)$, $\mu\restriction T_1$ is a model of $T_1$ in $T_1$. For all $\alpha \in \text{Ord}$, construct $T_\alpha$ as follows:
$$
T_\alpha =
\begin{cases}
\overline{\text{inf}(T_\beta)} & \text{if } \alpha = \beta + 1, \\
\bigcap_{\beta < \alpha} T_\beta & \text{if } \alpha \text{ is a limit ordinal.}
\end{cases}
$$

Let $\kappa$ denote the smallest ordinal for which $(T_\alpha)$ is constant. This stabilization must occur when $T_\kappa = \emptyset$, because if $T_\kappa \neq \emptyset$, then $T_\kappa$ contains vertices of infinite degree, making $T_{\kappa +1} \neq T_\kappa$. In turn, by Lemma~\ref{lem:Polat}, we have that $\kappa = \lambda +1$ for some $\lambda \in \text{Ord}$, meaning $T_\lambda$ is non-empty, locally finite, and rayless. By design, for any $\mu\in \text{GME}(T)$, $\mu\restriction T_\lambda$ is a model of $T_\lambda$ in $T_\lambda$. By Theorem~\ref{thm:equiv}, this embedding is equivalent to an isomorphism of $T_\lambda$. It follows by Halin \cite{H} that some vertex or edge of $T_\lambda$ is fixed by all such $\mu$.
\end{proof}

We should remark that given a model $\mu \in \text{GME}(T)$, a ``fixed vertex'' $v \in V(T)$ strictly means $v \in V(\mu(v))$, rather than $\mu(v) = \{v\}$. The same subset logic applies to a fixed edge. This is merely a notational technicality and does not affect the reduction. Having established Theorem~\ref{thm:fixed}, the arguments employed for Theorem 1 in \cite{A} (or Theorem 4.2 in \cite{BP1}) apply directly to anchor the unrooted tree at this fixed element, effectively reducing the problem to the rooted case. Thus, we omit the full proof.

\begin{theorem} For any small tree $T$, TAC($\equiv^*$) holds. In turn, the Main Theorem holds for all small unrooted trees.
\end{theorem}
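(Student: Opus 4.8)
The plan is to mirror the classical argument from \cite{A} that derives the $1$-or-$\infty$ dichotomy for unrooted trees from a fixed-element theorem, now using Theorem~\ref{thm:fixed} in place of its embeddability analogue. So suppose $T$ is small and $S \in [T]_*$ with $S \not\cong T$; the goal is to manufacture infinitely many pairwise non-isomorphic trees mutually minor-equivalent with $T$. First I would invoke Theorem~\ref{thm:fixed} to obtain a vertex or edge $z$ of $T$ fixed by every $\mu \in GM(T)$. The point of the fixed element is that it lets us pass from the unrooted problem to a rooted one: rooting $T$ at $z$ (or, in the edge case, at the midpoint, i.e. subdividing once or treating the two endpoints symmetrically) one checks that any minor model of $T$ in $S$ must, after composing back and forth, respect the fixed rooting, so that a model of $S$ in $T$ that fails to be an isomorphism induces non-isomorphism of $(T,z)$ and $(S,z')$ as \emph{rooted} trees for the corresponding fixed element $z'$ of $S$.

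Next I would apply the rooted results already proved, namely RootedTAC($\equiv^*$) together with the structural reduction preceding it (the lemma giving $|[(T_v,v)]_*| \le |[(T,r)]_*|$ and the observation that in a small tree the sequence $|[(T_{v_n},v_n)]_*|$ has a tail of $1$'s), to conclude that $(T,z)$ has a rooted equivalence class that is either trivial or infinite. Since $(S,z')$ witnesses non-triviality, $|[(T,z)]_*| \ge \aleph_0$, and one then produces the infinitely many rooted representatives exactly as in the rooted proof: by swapping finitely many branches $(T_v,v)$, $v \in succ(z)$, for non-isomorphic but minor-equivalent copies, or by varying the multiplicity of a self-reproducing branch. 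The last step is to observe that these rooted representatives are in fact \emph{unrooted} representatives too: a rooted minor equivalence is in particular an unrooted one, and distinct isomorphism types as rooted trees at a canonically-determined (fixed) vertex remain distinct as abstract trees because the fixed element is recoverable from the abstract tree via the same Theorem~\ref{thm:fixed} construction.

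The main obstacle I anticipate is the edge-fixed case and, relatedly, making the ``root-transfer'' rigorous: when the fixed element is an edge $e = xy$ rather than a vertex, one cannot literally root at $e$, so one must either argue that some model forces a vertex to be fixed after all, or work with the two subtrees $T_x, T_y$ hanging off $e$ and apply the rooted machinery to each half, being careful that a branch swap on one side does not secretly create an isomorphism by interacting with the other side across $e$. A second, more bookkeeping-level subtlety is verifying that the fixed element of $S$ matches up with that of $T$ under the mutual models — i.e. that $\mu_T$ and $\mu_S$ are compatible with the fixed rooting on \emph{both} trees simultaneously — which is precisely the kind of detail that \cite{A} handles for embeddability and which, as the excerpt asserts, ``applies directly'' once Theorem~\ref{thm:fixed} is in hand; I would simply check that every place the original argument uses an embedding it suffices to use a minor model, which is immediate since minor models compose and restrict in the same formal way (as set up in Section~2).
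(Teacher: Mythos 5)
Your proposal is exactly the route the paper intends: the paper omits the proof outright, stating that once Theorem~\ref{thm:fixed} is in hand ``the arguments employed for Theorem 1 in \cite{A} apply directly,'' and your sketch is precisely that Bonato--Tardif argument, transported from embeddability to minor models, with the correct identification of the two genuine subtleties (the edge-fixed case and the compatibility of the fixed elements of $T$ and $S$ under the mutual models). This matches the paper's approach.
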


\section{Open Questions}
One way to naturally generalise this line of enquiry and extend TAC($\hat{R}$) is by asking about the size of an equivalence class of a tree up to a stronger relation. Given $\hat{R} \in \{\equiv, \equiv^\sharp, \equiv^*\}$ and $\hat{S} \in \{\cong, \equiv, \equiv^\sharp\}$, we write $[T]_{\hat{R}}/ \hat{S}$ for the equivalence class of $T$ with respect to $\hat{R}$ up to $\hat{S}$.

\vspace{0.2cm}
\noindent
\textbf{TAC($\hat{R},\hat{S}$).} For any tree $T$,
$$|[T]_{\hat{R}}/ \hat{S}| \notin \N\smallsetminus \{1\}.$$

\vspace{0.2cm}
\noindent
\textbf{RootedTAC($\hat{R},\hat{S}$).} For any tree $T$ and any $r\in V(T)$,
$$|[(T,r)]_{\hat{R}}/ \hat{S}| \notin \N\smallsetminus \{1\}.$$

\vspace{0.2cm}
Combining all of the results developed thus far, we obtain that TAC($\equiv,\cong$) is false, RootedTAC($\hat{R},\cong$) is true for any $\hat{R} \in \{\equiv, \equiv^\sharp, \equiv^*\}$, and TAC($\hat{R},\cong$) is true for any $\hat{R} \in \{\equiv^\sharp, \equiv^*\}$. We pose the following conjectures:

\vspace{0.2cm}
\noindent
\textbf{Conjecture 1.} TAC($\hat{R},\hat{S}$) holds for any $\hat{R} \in \{\equiv^\sharp, \equiv^*\}$ and $\hat{S} \in \{\cong, \equiv, \equiv^\sharp\}$.

\vspace{0.2cm}
\noindent
\textbf{Conjecture 2.} For any $\hat{R} \in \{\equiv, \equiv^\sharp, \equiv^*\}$ and $\hat{S} \in \{\cong, \equiv, \equiv^\sharp\}$:
\begin{enumerate}
\item RootedTAC($\hat{R},\hat{S}$) holds.
\item Restricted to small trees, TAC($\hat{R},\hat{S}$) also holds.
\end{enumerate}

Finally, the techniques developed and employed to prove Theorem~\ref{thm:large} in \cite{BP1} heavily rely on the fact that trees are \textit{well-quasi-ordered} (wqo) under the topological minor relation (and thus, also under the graph minor relation). Although Tyomkyn in \cite{T} establishes TAC for all rooted trees under the embeddability relation (which is not a wqo), there appears to be a strong connection between wqo and TAC for unrooted trees and other similar structures.

\end{document}